\newtheorem{thm}{Theorem}[section]
\newtheorem{prop}[thm]{Proposition}
\newtheorem{cor}[thm]{Corollary}
\theoremstyle{definition}
\theoremstyle{remark}
\numberwithin{equation}{section}
\begin{document}


\title{The Kth Traveling Salesman Problem is Pseudopolynomial when TSP is polynomial}


\author{Brahim Chaourar}
\address{Department of Mathematics and Statistics, Al Imam University (IMSIU), P.O. Box
90950, Riyadh 11623,  Saudi Arabia \\ Correspondence address: P.O. Box 287574, Riyadh 11323, Saudi Arabia}
\email{bchaourar@hotmail.com}





\begin{abstract}
Given an undirected graph $G=(V, E)$ with a weight function $c\in R^E$, and a positive integer $K$, the Kth Traveling Salesman Problem (KthTSP) is to find $K$ Hamilton cycles $H_1, H_2, , ..., H_K$ such that, for any Hamilton cycle $H\not \in \{H_1, H_2, , ..., H_K \}$, we have $c(H)\geq c(H_i), i=1, 2, ..., K$. This problem is NP-hard even for $K$ fixed. We prove that KthTSP is pseudopolynomial when TSP is polynomial.
\end{abstract}


\maketitle




{\bf2010 Mathematics Subject Classification:} Primary 90C27, Secondary 90C57.
\newline {\bf Key words and phrases:} K best solutions, Traveling Salesman Problem, Kth best Traveling Salesman Problem, pseudopolynomial.

\section{Introduction}

\textbf{Sets and their characterisitic vectors will not be distinguished.} We refer to Bondy and Murty \cite{Bondy and Murty 2008} and Schrijver \cite{Schrijver 1986} about, respectively, graph theory and polyhedra terminolgy and facts.
\newline Given an undirected graph $G=(V, E)$ with a weight function $c\in R^E$, and a positive integer $K$, the Kth Traveling Salesman Problem (KthTSP) is to find $K$ distinct Hamilton cycles $H_1, H_2, , ..., H_K$ such that, for any Hamilton cycle $H\not \in \{H_1, H_2, , ..., H_K \}$, we have $c(H)\geq c(H_i), i=1, 2, ..., K$. Since KthTSP is the famous TSP for $K=1$, then KthTSP is NP-hard even for $K$ fixed. KthTSP is motivated by searching near optimal solutions with some special properties: when in addition of the TSP comstraints, ''there are some other wich might be difficult to consider explicitly in a mathematical model, or if considered, would increase largely the size of the model. By finding the best, second best, ..., Kth best solution, we are able to sequentially verify these solutions with respect to the additional constraints and stop when a solution that satisfies all of them is found'' \cite{Yanasse 2000}. Another motivation is that if, for any reason, the route of the best solution is unavailable, then alternate solutions (routes) are desirable \cite{Pollack 1961}.
\newline Finding K best solutions of an optimization problem in general has been studied by few authors \cite{Hamacher and Queyranne 1985, Murty 1968a, Murty 1968b, Wolsey 1973} and almost the same situation happened for particular problems \cite{Camerini et al. 1975, Camerini et al. 1980a, Camerini et al. 1980b, Chaourar 2008, Chaourar 2010, Gabow 1977, Hamacher et al. 1984, Katoh et al. 1981, Megiddo et al. 1981, Murty 1968b}.
\newline The remainder of the paper is organized as follows: in section 2, we give an algorithm for finding K best solutions for a general model containing KthTSP, then, in section 3, we apply this algorithm to KthTSP and deduce that it is polynomial on $K$ and $|E|$ when TSP is polynomial. And we conclude in section 4.

\section{An Algorithm for Finding K Best Solutions of a Large Class of Combinatorial Optimization Problems}

Let $P\subseteq R^m$ be a polyhedra, $f(m)$ be the number of its facets, $N(x)$ be the set of all neighbors of an extreme point $x\in P$, $x_K$ the Kth best solution in $P$, regarding to a given weight function and a given positive integer $K$.
\newline Based on the following property, an algorithm has been used for particular problems \cite{Chaourar 2008, Chaourar 2010}.
\begin{prop}
For any positive integer $j$ such that $2\leq j\leq K$, 
$$x_j\in \bigcup_{i=1}^{j-1} N(x_i)\backslash \{x_1, x_2, ..., x_{j-1}\}.$$
\end{prop}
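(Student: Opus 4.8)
The plan is to exploit the geometry underlying the abstract set-up: the candidate solutions are the extreme points of the polytope $P$, the objective $c$ is linear, and $N(x)$ is the adjacency relation of the $1$-skeleton (graph) of $P$, which I shall denote $G(P)$. The distinctness half of the claim is immediate: by definition of the ranking, $x_j$ differs from each of $x_1,\ldots,x_{j-1}$, so $x_j\notin\{x_1,\ldots,x_{j-1}\}$. It therefore remains to produce an index $i<j$ with $x_j\in N(x_i)$, and since adjacency is symmetric this amounts to showing that $x_j$ has at least one neighbour among the $j-1$ best extreme points.

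The engine of the argument is a level-set connectivity fact, which I would isolate as a lemma: for every real $t$, the extreme points $v$ of $P$ with $c(v)\le t$ induce a connected subgraph of $G(P)$. To prove it I would invoke the standard improving-edge property of linear optimisation over a bounded polytope. If $u$ is an extreme point that is not a $c$-minimiser, then the cone of feasible edge directions at $u$ cannot consist entirely of directions $d$ with $c\cdot d\ge 0$, for otherwise $u$ would be a local, hence (by convexity) global, minimiser; thus some edge of $P$ issuing from $u$ strictly decreases $c$ and, by boundedness of $P$, ends at an adjacent extreme point $u'$ with $c(u')<c(u)$. Iterating, every extreme point of the level set is joined to the unique minimiser by a $c$-decreasing path that never leaves the level set, so the level set is connected.

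I would then apply this lemma with $t=c(x_j)$. The delicate point here, and the step I expect to be the main obstacle, is ties in the objective: if several extreme points share the value $c(x_j)$, the level set can be strictly larger than $\{x_1,\ldots,x_j\}$, and $x_j$ might be adjacent only to a co-ranked vertex, so the naive argument fails. I would remove this difficulty by replacing $c$ with an arbitrarily small generic perturbation (equivalently, by fixing a consistent tie-breaking rule), so that all extreme points receive pairwise distinct values; this refines but does not disturb the notion of ``$i$th best'' already implicit in the ranking. With distinct values, the level set at threshold $c(x_j)$ is exactly $\{x_1,\ldots,x_j\}$.

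Finally, the lemma shows that $\{x_1,\ldots,x_j\}$ induces a connected subgraph $G_j$ of $G(P)$. Since $j\ge 2$, the connected graph $G_j$ has no isolated vertex, so $x_j$ has a neighbour in $G_j$, necessarily some $x_i$ with $i<j$. The corresponding edge of $P$ witnesses $x_j\in N(x_i)\subseteq\bigcup_{i=1}^{j-1}N(x_i)$, and combined with $x_j\ne x_i$ this completes the proof.
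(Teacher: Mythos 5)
The paper itself offers no proof of this proposition: it is stated as a known property and attributed to earlier work, so there is nothing internal to compare your argument with. Your approach --- level-set connectivity of the vertex--edge graph of $P$ via improving edges --- is the standard route to this kind of adjacency/ranking property, and the lemma you isolate is correct for a polytope (in the paper's setting $P$ is the convex hull of $0/1$ vectors, so boundedness is automatic): the tangent cone at a vertex is generated by the incident edge directions, so a vertex all of whose edge directions $d$ satisfy $c\cdot d\ge 0$ is a global minimizer, and every non-minimal vertex therefore has a strictly improving edge.

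The genuine gap is in your treatment of ties. You assert that a generic perturbation ``refines but does not disturb the notion of $i$th best already implicit in the ranking,'' but a linear perturbation realizes only some orderings of tied vertices, not an arbitrary one, and the proposition as literally stated is false for an adversarial consistent ranking. Concretely, take $P$ a hexagon with vertices $v_1,\dots,v_6$ in cyclic order and $c\equiv 0$: every ordering of the vertices is a valid ranking, so one may take $x_1=v_1$ and $x_2=v_4$, yet $N(v_1)=\{v_2,v_6\}$ does not contain $v_4$. What is true, and what the algorithm actually needs, is the existential version: some vertex of $\bigcup_{i=1}^{j-1}N(x_i)\backslash\{x_1,\dots,x_{j-1}\}$ attains the minimum of $c$ over all vertices outside $S=\{x_1,\dots,x_{j-1}\}$, and hence may be chosen as $x_j$. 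Your own machinery proves this with a small rearrangement: let $v^*$ minimize $c$ over the vertices outside $S$ and follow a $c$-nonincreasing path from $v^*$ toward the (connected) face of global minimizers; the first edge of this path that enters $S$ leaves from a vertex $u\notin S$ with $c(u)\le c(v^*)$, hence $c(u)=c(v^*)$, and $u$ lies in the required neighborhood union. Alternatively, fix the generic tie-breaking once and for all and let it define the ranking; then your argument as written goes through, but the proposition must then be read as holding for that particular ranking rather than for every ranking consistent with $c$.
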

Since selecting $K$ best numbers from a list of $n$ numbers requires a running time complexity of $O(n+K log K)$ \cite{Cormen et al. 2009}, solving an $n\times n$ system of linear equations is $O(n^3)$ \cite{Farebrother 1988}, and if $C(m)$ is the running time complexity for finding the best solution on $P$, then we have the following two consequences.
\begin{cor}
The running time complexity for finding K best solutions of $P$ regarding to a given weight function is $O(C(m)+K N m^3+K log K)$ where $N$ is the maximum cardinality of all $N(x_i), i=1, ..., K-1$.
\end{cor}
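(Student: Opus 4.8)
The plan is to exhibit an explicit algorithm, justified by the above Proposition, and then to account for its cost term by term so as to recover the bound $O(C(m)+KNm^3+K\log K)$. First I would compute the best solution $x_1$ by a single call to the optimization oracle on $P$, which by hypothesis costs $C(m)$. I would then maintain a candidate pool $S$ together with the weight of each of its elements; immediately after $x_1$ is found I initialize $S$ by inserting every neighbor in $N(x_1)$.

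The heart of the algorithm is the loop producing $x_2,\dots,x_K$. By the Proposition, for each $j$ with $2\le j\le K$ the point $x_j$ is the minimum-weight element of $\bigcup_{i=1}^{j-1}N(x_i)$ that has not already been selected; since every such element has been inserted into $S$ by the time it is needed, at step $j$ it suffices to extract from $S$ the cheapest not-yet-chosen candidate, declare it to be $x_j$, and then enlarge $S$ by the neighbors $N(x_j)$. This maintains the invariant that $S$ contains $\bigcup_{i=1}^{j-1}N(x_i)$ minus the points already output, which is exactly what the Proposition needs for correctness.

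For the running-time analysis I would bound the three operations separately. Computing the neighbor set $N(x_i)$ of an extreme point is where the $m^3$ factor enters: each extreme point of $P$ is determined by $m$ linearly independent active facets, and each neighbor is obtained by relaxing one active facet and resolving for the adjacent vertex, i.e.\ by solving an $m\times m$ system of linear equations at cost $O(m^3)$. Hence listing all of $N(x_i)$ costs $O(Nm^3)$, where $N$ bounds $|N(x_i)|$, and since neighbors are computed only for the $K-1$ points $x_1,\dots,x_{K-1}$ the total neighbor-generation cost is $O(KNm^3)$. The pool $S$ accumulates at most $O(KN)$ elements in all, so repeatedly extracting the $K$ cheapest amounts to selecting the $K$ smallest from a list of $O(KN)$ numbers, which by the cited selection bound costs $O(KN+K\log K)$. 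Adding the oracle cost $C(m)$ and absorbing the $O(KN)$ term into $O(KNm^3)$ yields $O(C(m)+KNm^3+K\log K)$.

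I expect the only delicate point to be the neighbor-generation step: one must argue that adjacency in $P$ is detected correctly from the active-facet description and that each adjacent vertex is recovered by a single linear-system solve, so that the per-vertex cost is genuinely $O(Nm^3)$ rather than a quantity depending on the total facet count $f(m)$. Everything else is bookkeeping: correctness is immediate from the Proposition, and the remaining costs are exactly the standard selection and linear-algebra complexities quoted above.
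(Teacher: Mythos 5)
Your proposal is correct and follows essentially the same route the paper intends: the paper gives no explicit proof but, immediately before stating the corollary, cites exactly the two ingredients you use --- selection of the $K$ best from $n$ numbers in $O(n+K\log K)$ and an $O(m^3)$ linear-system solve per neighbor --- combined with Proposition 2.1 to get $O(C(m)+KNm^3+K\log K)$. The ``delicate point'' you flag about neighbor generation is indeed glossed over by the paper as well (its only concession is the later bound $N\leq mf(m)-m^2$), so your reconstruction matches the paper's argument in both substance and level of rigor.
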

Since $N$ can be bounded by $m f(m)-m^2$ then:
\begin{cor}
The running time complexity for finding K best solutions of $P$ regarding to a given weight function is $O(C(m)+K m^4 f(m)+K log K)$.
\end{cor}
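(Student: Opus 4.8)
The plan is to derive Corollary 2.4 directly from Corollary 2.3: it suffices to justify the stated bound $N \le m f(m) - m^2$ on the maximum number of neighbors of an extreme point and then substitute it into $O(C(m)+KNm^3+K\log K)$. To bound $N$, let $x$ be an extreme point of $P\subseteq\R^m$. Because $P$ lies in $\R^m$, the point $x$ is the unique solution of a subsystem of $m$ linearly independent facet-defining inequalities turned into equalities, its active constraints. Every edge of $P$ incident to $x$ is a one-dimensional face obtained by relaxing exactly one of these $m$ equalities (the remaining $m-1$ tight constraints cut out a line) and then moving along that line until a previously inactive facet becomes tight, the endpoint being the neighbor. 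There are at most $m$ equalities that may be relaxed and at most $f(m)-m$ remaining facets that may become active, so $x$ has at most $m(f(m)-m)=mf(m)-m^2$ neighbors, that is, $N\le mf(m)-m^2$.

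The second step is the substitution. Replacing $N$ by $mf(m)-m^2$ in the running time of Corollary 2.3 gives $KNm^3\le K(mf(m)-m^2)m^3=Km^4f(m)-Km^5$, whose dominant term is $Km^4f(m)$ (a full-dimensional polytope in $\R^m$ has at least $m+1$ facets, so $f(m)\ge m+1$ and the subtracted quantity is of lower order). Therefore $O(C(m)+KNm^3+K\log K)=O(C(m)+Km^4f(m)+K\log K)$, which is exactly the asserted complexity.

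I expect the main obstacle to be making the bound on $N$ fully rigorous under degeneracy. When more than $m$ facets are active at $x$, the point admits several distinct bases and a single neighbor can be reached through more than one relaxation, so one must check that the exchange argument still reaches every neighbor and that the crude product $m(f(m)-m)$ remains a valid upper bound rather than a mere per-basis count. The remaining work, namely the arithmetic of the substitution and the absorption of the lower-order $-Km^5$ term into the $O(\cdot)$, is routine.
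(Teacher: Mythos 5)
Your second step---substituting $N \le m f(m)-m^2$ into the bound $O(C(m)+KNm^3+K\log K)$ of the preceding corollary and absorbing the lower-order $-Km^5$ term via $f(m)\ge m+1$---is exactly what the paper does: its entire justification for this corollary is the single clause ``Since $N$ can be bounded by $mf(m)-m^2$,'' so on that portion you and the paper coincide and the arithmetic is fine.

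The genuine gap is in your first step, and it is precisely the one you flag at the end: the claim that every edge at $x$ arises by relaxing one of a fixed set of $m$ tight, linearly independent facet inequalities is a \emph{per-basis} count, valid only when $x$ is a simple (non-degenerate) vertex. At a degenerate vertex the bound $N\le mf(m)-m^2$ is not merely harder to prove; it can fail. The neighbors of $x$ correspond to the vertices of the vertex figure $P/x$, an $(m-1)$-dimensional polytope whose facets correspond to the facets of $P$ containing $x$; by the upper bound theorem such a polytope with $f$ facets can have $\Theta\bigl(f^{\lfloor (m-1)/2\rfloor}\bigr)$ vertices, and a pyramid over a dual cyclic polytope realizes this: the apex lies on all but one of the $f$ facets and has $\Theta\bigl(f^{\lfloor (m-1)/2\rfloor}\bigr)$ neighbors, which exceeds $mf-m^2$ once $m\ge 5$ and $f$ is large. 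So the statement as written needs either the hypothesis that $P$ is simple, or a reinterpretation of ``neighbor'' as ``adjacent basis'' (of which there are indeed at most $m(f(m)-m)$, at the cost of having to argue that the next-best extreme point is always reachable by a single pivot from some stored basis). This defect is present in the paper as well, which asserts the bound with no argument at all; but since the polytopes the paper ultimately cares about (e.g.\ the TSP polytope) are highly degenerate, it is not a technicality that can be waved away, and your proposal as it stands does not close it.
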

\begin{cor}
If $C(m)$ and $f(m)$ are polynomial on $m$ then finding K best solutions of $P$ is pseudopolynomial, i.e., polynomial on $m$ and $K$.
\end{cor}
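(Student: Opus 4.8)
The plan is to derive the statement directly from the running-time bound established in the preceding Corollary, namely $O(C(m)+K m^4 f(m)+K\log K)$. First I would invoke the hypothesis that both $C(m)$ and $f(m)$ are polynomial in $m$, writing $C(m)=O(m^a)$ and $f(m)=O(m^b)$ for suitable constants $a,b$. Then I would examine the three summands separately: the term $C(m)$ is polynomial in $m$ by assumption; the term $K m^4 f(m)=O(K m^{4+b})$ is the product of $K$ with a polynomial in $m$, hence polynomial jointly in $m$ and $K$; and the term $K\log K$ is bounded above by $K^2$, hence polynomial in $K$. Adding the three contributions yields a total bound of the form $O(m^a+K m^{4+b}+K^2)$, which is manifestly polynomial in the two quantities $m$ and $K$.

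Second, I would address the terminology, since the force of the statement lies in the word \emph{pseudopolynomial}. The quantity $K$ is a numeric input parameter that may be as large as the number of extreme points of $P$; for the motivating case of KthTSP this is the number of distinct Hamilton cycles, which is in general exponential in $m$. Consequently, a running time that is polynomial in $m$ and in the \emph{value} of $K$ is \emph{not} polynomial in the encoding length of the input, because that length depends on $\log K$ rather than on $K$ itself. I would make this identification explicit so the reader sees precisely why ``polynomial in $m$ and $K$'' is the correct reading of ``pseudopolynomial,'' and why the stronger word ``polynomial'' cannot be claimed.

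I do not anticipate a substantive obstacle: the corollary is an immediate specialization of the explicit complexity formula of the previous Corollary once the polynomiality hypotheses on $C(m)$ and $f(m)$ are substituted in. The only point meriting care is the final bookkeeping that ties ``polynomial in $m$ and $K$'' to the notion of pseudopolynomiality, ensuring the conclusion is stated at exactly the strength the hypotheses support.
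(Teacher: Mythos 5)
Your proposal is correct and follows exactly the route the paper intends: the corollary is an immediate substitution of the polynomial bounds for $C(m)$ and $f(m)$ into the running time $O(C(m)+K m^4 f(m)+K\log K)$ from the preceding corollary, which the paper leaves implicit. Your additional remark clarifying why ``polynomial in $m$ and the value of $K$'' is only pseudopolynomial (since the encoding length of $K$ is $\log K$) is a useful precision that the paper omits but does not change the argument.
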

We will propose now a new algorithm which generalizes one used in \cite{Hamacher and Queyranne 1985} for the Kth Best Base of a Matroid (KBBM).
\newline Let us give a general model of combinatorial objects containing Hamilton cycles.
\newline Let $E$ be a finite set and $\mathcal X=\subseteq \{0, 1\}^E$.
We say that $\mathcal X$ is an $\alpha$-bases system, where $\alpha$ is a positive integer, if the following conditions hold:
\begin{enumerate}
\item $\alpha =Min\{x\backslash y$ such that $(x, y)\in \mathcal X^2, x\neq y\}$;
\item there exists a positive integer $r$ such that $x(E)=r$, for any $x\in \mathcal X$;
\item for any $(x, x')\in \mathcal X^2$, there exist $t\in N$, $F'_i\subseteq x'\backslash x$, and $F_i\subseteq x\backslash x', i=1, 2, ..., t$ such that $\alpha \leq |F_i|=|F'_i|\leq \alpha +1$, $x'=x\backslash (\bigcup_{i=1}^t F_i)\cup  (\bigcup_{i=1}^t F'_i)$ and $x\backslash (\bigcup_{i\in I\subseteq \{1, ..., t\}} F_i)\cup  (\bigcup_{i\in I\subseteq \{1, ..., t\}} F'_i)\in \mathcal X$.
\end{enumerate}
Such pair $(F_i, F'_i)$ verifying the condition (3) is called an $x$-exchangeable pair.
\newline Note that bases of a matroid form a 1-bases system and we will prove that Hamilton cycles of a complete graph form a 2-bases system.
\newline We have then the following property for K best solutions of $\alpha$-bases system.
\begin{thm}
Given a weight function $c\in R^E$ and a jth $c$-best solution (of $\mathcal X$) $x$. If $(F_0, F'_0)$ is an $x$-exchangeable pair such that $c(F_0)-c(F'_0)=Maximum\{c(F)-c(F')$ such that $(F, F')$ is an $x$-exchangeable pair and $c(F)-c(F')\leq 0$\}, then $x_0=(x\backslash F_0)\cup F'_0$ is a (j+1)th $c$-best solution of $\mathcal X$. 
\end{thm}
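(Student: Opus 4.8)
The plan is to argue in two stages: first that $x_0$ is a legitimate candidate lying weakly above $x$, and then that nothing in $\mathcal X$ is squeezed strictly between them. For the first stage I would invoke condition (3) with a single index: since $(F_0,F_0')$ is an $x$-exchangeable pair, the clause $x\setminus(\bigcup_{i\in I}F_i)\cup(\bigcup_{i\in I}F_i')\in\mathcal X$ applied to the singleton $I$ shows $x_0=(x\setminus F_0)\cup F_0'\in\mathcal X$, while $|F_0|\ge\alpha\ge 1$ forces $x_0\neq x$. Writing the weight difference as $c(x_0)-c(x)=c(F_0')-c(F_0)$, the defining inequality $c(F_0)-c(F_0')\le 0$ gives $c(x_0)\ge c(x)$. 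I would also record that the maximand is taken over a nonempty set: if some $y\in\mathcal X$ has $c(y)>c(x)$, then decomposing $y$ against $x$ via condition (3) yields exchange increments summing to $c(y)-c(x)>0$, so at least one pair must have $c(F)-c(F')\le 0$; and if no such $y$ exists then $x$ is a heaviest solution and there is no $(j+1)$st solution to discuss.

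For the second stage I would unwind the meaning of a $j$th $c$-best solution: $x$ together with the earlier solutions $x_1,\dots,x_{j-1}$ forms a family $S$ of size $j$ such that every solution outside $S$ weighs at least $c(x)$. It then suffices to show that no $y\in\mathcal X\setminus S$ satisfies $c(x)\le c(y)<c(x_0)$; granting this, $S\cup\{x_0\}$ is a family of $j+1$ best solutions and $x_0$ is a $(j+1)$th best. So I would assume for contradiction that such a $y$ exists and decompose it against $x$ by condition (3) into $x$-exchangeable pairs $(F_1,F_1'),\dots,(F_t,F_t')$ with increments $\delta_i=c(F_i')-c(F_i)$ summing to $c(y)-c(x)\in[0,\,c(x_0)-c(x))$, where moreover every sub-union $x_I=(x\setminus\bigcup_{i\in I}F_i)\cup(\bigcup_{i\in I}F_i')$ again lies in $\mathcal X$.

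The heart of the argument is to extract from this decomposition a \emph{single} $x$-exchangeable pair whose increment falls in the forbidden window $[0,\,c(x_0)-c(x))$; such a pair has $c(F)-c(F')\le 0$ with $c(F)-c(F')>c(F_0)-c(F_0')$, directly contradicting the maximality that defines $(F_0,F_0')$. The natural move is extremal: choose $I$ minimal for inclusion among the sub-unions with $c(x_I)>c(x)$, so that deleting any index returns the weight to at most $c(x)$; a short count of increments then collapses $I$ to a singleton and hands us one pair of strictly positive increment. The main obstacle is precisely here. This extremal step, by itself, only guarantees a positive increment, not one below $c(x_0)-c(x)$: a large positive increment offset by compensating negative increments could in principle slip $y$ into the interval $(c(x),c(x_0))$ without producing any single cheap exchange. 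The delicate point I expect to be decisive is therefore the coupling between the all-subsets closure of condition (3) and the weight arithmetic, which must be exploited to rule out exactly such mixed-sign configurations and to force the witnessing singleton into the forbidden window. Once a single exchangeable pair in that window is produced, the contradiction is immediate and the theorem follows.
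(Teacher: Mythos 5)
Your first stage, and your remark that when every increment $\delta_i=c(F_i')-c(F_i)$ is nonnegative each one is automatically at least $\mu:=c(x_0)-c(x)$, reproduce in substance the paper's argument for $j=1$, where optimality of $x$ forces all increments of $x$-exchangeable pairs to be nonnegative. But the proposal stalls exactly where you say it does, and the obstacle you flag is not a technicality that a cleverer use of condition (3) will remove: the mixed-sign configuration genuinely occurs, and when it does the conclusion itself fails for a general $\alpha$-bases system. Take the $1$-bases system consisting of the bases $\{a_i,b_j,c_k\}$ of the direct sum of three rank-one uniform matroids on $\{a_1,a_2\}$, $\{b_1,b_2\}$, $\{c_1,c_2\}$, with weights $c(a_1)=c(b_1)=c(c_1)=0$, $c(a_2)=5$, $c(b_2)=7$, $c(c_2)=9$. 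The four best solutions are $a_1b_1c_1,\ a_2b_1c_1,\ a_1b_2c_1,\ a_1b_1c_2$ with weights $0,5,7,9$, and the fifth best is $a_2b_2c_1$ with weight $12$. For $x=x_4=a_1b_1c_2$ the exchangeable pairs with $c(F)-c(F')\le 0$ have values $-5$, $-7$, $-12$ (the pairs involving $c_2\to c_1$ all have positive value and are excluded), so the recipe of the theorem outputs $a_2b_1c_2$ of weight $14$ and skips $a_2b_2c_1$. That skipped solution decomposes against $x$ into the pairs $(\{a_1\},\{a_2\})$ and $(\{b_1,c_2\},\{b_2,c_1\})$ with increments $+5$ and $-2$ --- precisely your ``large positive increment offset by a compensating negative one.'' No argument can extract from it the single cheap exchange your contradiction requires, because none exists; the decisive coupling you hope for is not there.

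For comparison, the paper's own proof is rigorous only for $j=1$ (where $\mathcal F_x=\emptyset$ and your all-nonnegative case is the whole story); for $j\ge 2$ it asserts that the better solutions arise from subsets of $\mathcal F_x$ and appeals to ``a similar argument as for $j=1$,'' which is exactly the step you could not, and cannot, supply. The statement that is actually provable in this line of work (compare Proposition 2.1 and the Hamacher--Queyranne scheme) is weaker: the $(j+1)$th best solution is one exchange away from \emph{some} $x_i$ with $i\le j$, not necessarily from $x_j$ itself --- in the example above, $a_2b_2c_1$ is a single size-two exchange from $x_1$. A correct proof must therefore search the exchange neighborhoods of all of $x_1,\dots,x_j$, and your second stage should be reorganized around that weaker claim rather than around exchanges from $x_j$ alone.
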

\begin{proof} By induction on $j\geq 1$.
\newline By using the condition (3) of the definition of $\alpha$-bases systems, any $x'\in \mathcal X\backslash \{x\}$ can be expressed as $x'=x\backslash (\bigcup_{i=1}^t F_i)\cup  (\bigcup_{i=1}^t F'_i)$ for some $x$-exchangeable pairs $(F_i, F'_i)\subseteq (x\backslash x')\times(x'\backslash x), i=1, ..., t$. Since $x$ is a $c$-best solution then  $\mathcal F_x=\{(F, F')$ $x$-exchangeable pairs such that $c(F)-c(F')>0$\}=$\O$. Thus $c(x_0)=c(x)-(c(F_0)-c(F'_0))\leq c(x)-\sum_{i=1}^t (c(F_i)-c(F'_i))=c(x')$. So $x_0$ is the 2nd $c$-best solution.
\newline Suppose now that $j\geq 2$ and let $x_i$ be the ith $c$-best solution for $i=1, 2, ..., j$.
\newline For any subset $X\subseteq \mathcal F_x$ we can get a $x_i=x\backslash (\bigcup_{(F, F')\in X} F)\cup  (\bigcup_{(F, F')\in X} F')$ and $c(x_i)=c(x)-\sum_{(F, F')\in X} (c(F)-c(F'))\leq c(x)$ ($X=\O$ gives $x=x_j$ itself and $X=\mathcal F_x$ gives the $c$-best solution). It follows that $x_{j+1}=x_0=x\backslash F_0\cup F'_0$ because of a similar argument as for $j=1$.
\end{proof}
This proof gives an algorithm for finding K best solutions in $\alpha$-bases systems. The algorithm consists of finding the best solution first ($O(C(m))$) and then the 2nd best by adding a subset to the (best) solution ($O(|E|-r)$), finding the matched subsets of our (best) solution forming an $x$-echangeable pair ($O(\theta)$) and choosing the best subset of this solution forming an exchangeable pair ($O(r)$). By repeating this procedure $K$ times, the running time complexity of this algorithm is $O(C(m)+K r (|E|-r) \theta)$ where $\theta$ is the running time complexity of the oracle used to find exchangeable pairs.

\section{KthTSP is pseudopolynomial when TSP is polynomial}

First we need to prove that Hamilton cycles of a complete graph verify the properties (1)-(3) of $\alpha$-bases systems.
\begin{thm}
Hamilton cycles of a complete graph form a 2-bases system.
\end{thm}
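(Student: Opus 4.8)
The plan is to verify directly the three defining conditions of a $2$-bases system for the set $\mathcal{X}$ of (edge sets of) Hamilton cycles of the complete graph on $n=|V|$ vertices (taking $n\ge 4$, so that at least two distinct Hamilton cycles exist). Condition (2) is immediate: every Hamilton cycle uses exactly $n$ edges, so $x(E)=r=n$ for all $x\in\mathcal{X}$. For condition (1) I must show that the minimum of $|H\setminus H'|$ over distinct Hamilton cycles $H,H'$ equals $2$. For the lower bound, suppose $|H\setminus H'|=1$; since $|H|=|H'|=n$ this forces $|H'\setminus H|=1$ as well, so $H$ and $H'$ share $n-1$ edges. Those common edges form a Hamilton path, and the only edge that closes a given Hamilton path into a cycle is the one joining its two endpoints; hence $H=H'$, a contradiction. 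Thus $|H\setminus H'|\ge 2$, and equality is realized by any single $2$-opt move (delete two independent edges of $H$ and reconnect the two resulting paths the other way), which changes exactly two edges. Hence $\alpha=2$.

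Condition (3) is the substantial part. First I would reduce to the edge-disjoint case: the common edges $H\cap H'$ form vertex-disjoint paths, and since every admissible block satisfies $F_i\subseteq H\setminus H'$ and $F_i'\subseteq H'\setminus H$, the common edges are fixed by every move. Contracting each maximal common path to a single vertex turns $H,H'$ into edge-disjoint Hamilton cycles on the contracted graph, and a subset of moves produces a single spanning cycle after un-contraction if and only if it does so before; so I may assume $H\cap H'=\emptyset$. The goal is then to partition $H\setminus H'$ into blocks $F_1,\dots,F_t$ and $H'\setminus H$ into blocks $F_1',\dots,F_t'$ with $2\le |F_i|=|F_i'|\le 3$, each swap $F_i\to F_i'$ being a valid $2$-opt or $3$-opt, in such a way that for \emph{every} $I\subseteq\{1,\dots,t\}$ the set $(H\setminus\bigcup_{i\in I}F_i)\cup\bigcup_{i\in I}F_i'$ is again a single Hamilton cycle.

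To build such a decomposition I would proceed by induction on $|H\setminus H'|$, at each step peeling off one exchangeable pair of size $2$ or $3$ that moves $H$ strictly closer to $H'$ while keeping a single spanning cycle; the symmetric difference $H\triangle H'$ is an even subgraph and decomposes into alternating $H/H'$ cycles, which organizes the search for these small moves. A size-$3$ (i.e.\ $3$-opt) block is needed exactly when no single $2$-opt makes progress without splitting the cycle, as happens for the two edge-disjoint Hamilton cycles of $K_5$, whose five differing edges split as $2+3$. The delicate requirement is that the blocks be chosen so that the swaps are mutually independent, for instance so that the reversal intervals they induce form a laminar (non-crossing) family; for such a family the moves commute and any sub-collection still yields a connected spanning cycle.

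I expect the main obstacle to be precisely this subset-closure: condition (3) demands a Hamilton cycle for \emph{all} $2^t$ choices of $I$, not merely for the full transformation or for the prefixes of a sequence of moves. Swapping along an alternating cycle of $H\triangle H'$ automatically preserves $2$-regularity, but not connectivity, so a generic decomposition can produce unions of several disjoint cycles for intermediate $I$. The heart of the proof is therefore to show that the small ($2$- and $3$-edge) moves can always be arranged into a connectivity-preserving, non-crossing family; verifying that this is possible for arbitrary pairs of Hamilton cycles, using only blocks of size at most $\alpha+1=3$, is where the real work lies.
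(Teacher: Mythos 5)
Your verification of conditions (1) and (2) is fine, and in fact more careful than it needs to be (the argument that $|H\setminus H'|=1$ is impossible is a nice touch). But for condition (3) you have written a plan, not a proof, and the plan stops exactly at the decisive point. You correctly identify that the hard part is the subset-closure requirement --- that $\left(H\setminus\bigcup_{i\in I}F_i\right)\cup\bigcup_{i\in I}F_i'$ must be a single Hamilton cycle for \emph{every} $I\subseteq\{1,\dots,t\}$, not just for $I=\{1,\dots,t\}$ --- and then you declare that showing the $2$- and $3$-edge moves ``can always be arranged into a connectivity-preserving, non-crossing family'' is ``where the real work lies.'' That work is not done. Two sub-claims in particular are asserted without argument: (i) that at every stage there exists a $2$-opt or $3$-opt move, using only edges of $H\setminus H'$ and $H'\setminus H$, that strictly decreases $|H\setminus H'|$ while preserving Hamiltonicity (the alternating-cycle structure of $H\triangle H'$ organizes the search, as you say, but does not by itself produce such a move); and (ii) that the moves accumulated by the induction can be made laminar, and that laminarity of the induced reversal intervals really does imply that an arbitrary sub-collection of the swaps keeps the spanning subgraph connected. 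Without (i) and (ii) the induction has no step, so condition (3) remains unverified.

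For comparison, the paper's proof runs the same kind of induction on $d(H,H')=|H\setminus H'|$: it exhibits a $4$-cycle $C=\{e,e',f,f'\}$ with $e,f\in H\setminus H'$ and $e'\in H'\setminus H$ such that $H''=H\Delta C$ is Hamiltonian and closer to $H'$, applies the induction hypothesis to $H''$, and, when the auxiliary edge $f'$ does not lie in $H'$, merges it into the $H''$-exchangeable pair that removes it, producing a pair of cardinality $3$. So your overall strategy --- inductively peeling off small exchanges and worrying about their independence --- is the intended one, and you isolate the obstruction more explicitly than the paper does; but to turn the proposal into a proof you must actually supply the progress-making move at each step and give a concrete argument that the resulting family of pairs satisfies the closure condition for all $2^t$ choices of $I$.
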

\begin{proof} For Hamilton cycles, E is the set of edges of a given complete graph $K_n$.
\newline \textbf{Property (1):} It is clear that $\alpha=2$.
\newline \textbf{Property (2):} It is clear that $r=n$.
\newline \textbf{Property (3):} Let $H$ and $H'$ two distinct Hamilton cycles and $d(H, H')=|H\backslash H'|$. We will prove this property by induction on $d(H, H')$.
\newline If $d(H, H')=2$ (respectively 3) then let $F=H\backslash H'$ and $F'=H'\backslash H$. It is not difficult to see that $H'=(H\backslash F)\cup F'$, $|F|=|F'|=2=\alpha$ (respectively $=3=\alpha +1$) and $(F, F')$ is an $H$-exchangeable pair.
\newline If $d(H, H')=2p$ (respectively $2p+1$), with $p\geq 2$, then there exists a circuit $C=\{e, e', f, f'\}$ (of cardinality 4) such that $\{e, f\}\subseteq H\backslash H'$, $e'\in H'\backslash H$, $f'\notin H$ and $H''=H\Delta C=H\backslash \{e, f\}\cup \{e', f'\}$ is a Hamilton cycle. It is clear that $d(H'', H')\leq d(H, H')-1$. By induction, $H'$ can be expressed in means of $H''$ and $H''$-exchangeable pairs. If $f'\in H'$ then we are done. Else, one of the removed $H''$-exchangeable pairs should contain $f'$ and by subsituting $H''$, we will get an $H$-exchangeable pair with components of cardinality 3.
\end{proof}
Since finding exchangeable pairs corresponds to choose 2 nonadjacent edges (respectively 3 edges) from a Hamilton cycle and to find 2 nonadjacent edges (respectively 3 edges) such that exchanging between them gives a new Hamilton cycle then $O(\theta)=O(1)$. It follows that the running time complexity of our algorithm for KthTSP is $O(C(m)+K n m)$. Then we can state our main result.
\begin{cor}
KthTSP is pseudopolynomial when TSP is polynomial.
\end{cor}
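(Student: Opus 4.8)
The plan is to combine the structural result that Hamilton cycles form a $2$-bases system with the generic K-best algorithm of Section~2 and its running-time bound, specializing the parameters to the traveling-salesman setting. First I would invoke the theorem that the Hamilton cycles of a complete graph $K_n$ constitute a $2$-bases system; this is exactly the hypothesis needed to run the exchange-based algorithm for $\alpha$-bases systems, whose complexity was established as $O(C(m)+K\,r\,(|E|-r)\,\theta)$, with $C(m)$ the cost of a single best-solution (TSP) computation and $\theta$ the cost of the oracle finding $x$-exchangeable pairs.

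Next I would read off the parameters for KthTSP. The ground set is $E$, the edge set of $K_n$, so the ambient dimension is $m=|E|$; by Property~(2) of a $2$-bases system every Hamilton cycle uses exactly $r=n$ edges, and hence $|E|-r=O(m)$. The crucial observation is that, for Hamilton cycles, producing an exchangeable pair reduces to selecting either two nonadjacent edges or three edges of the current cycle and testing whether the swap yields another Hamilton cycle; each such test is performed in constant time, so $\theta=O(1)$. Substituting $r=n$, $|E|-r=O(m)$ and $\theta=O(1)$ into the bound above gives the overall running time $O(C(m)+K\,n\,m)$.

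Finally I would conclude. By hypothesis TSP is polynomial, meaning $C(m)$ is bounded by a polynomial in $m=|E|$; the residual term $K\,n\,m$ is polynomial in $|E|$ and $K$ because $n=O(\sqrt{m})$. Therefore the algorithm runs in time polynomial in $|E|$ and $K$, which is precisely what it means for KthTSP to be pseudopolynomial. The argument is essentially bookkeeping once the $2$-bases structure is in hand; the only point that deserves genuine care—and the nearest thing to an obstacle—is verifying that the exchangeable-pair oracle really is constant time, i.e.\ that the pair structure exhibited in the proof that Hamilton cycles form a $2$-bases system coincides with what the algorithm queries, so that no hidden polynomial factor enters $\theta$.
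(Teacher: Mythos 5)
Your proposal is correct and follows essentially the same route as the paper: specialize the $\alpha$-bases running-time bound with $m=|E|$, $r=n$, $\theta=O(1)$ to obtain $O(C(m)+K\,n\,m)$, then invoke polynomiality of $C(m)$ under the hypothesis that TSP is polynomial. The only piece of the paper's argument you omit is the case where TSP is polynomial only on a special \emph{non-complete} class of graphs, which the paper handles by passing to the complete graph and assigning infinite weight to the missing edges --- a small but necessary addendum, since the $2$-bases structure (and hence the algorithm) was established only for Hamilton cycles of a complete graph.
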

\begin{proof}
If TSP is polynomial for (special instances of) complete graphs then $C(m)$ is polynomial and we are done.
\newline If TSP is polynomial for special classes of graphs, then we can put an infinity weight to removed edges from the corresponding complete graph and we get the same result.
\end{proof}
Note that, with a natural modification, our algorithm works for arbitrary weights and for Max KthTSP.

\section{Conclusion}

We have generalized an algorithm described in \cite{Hamacher and Queyranne 1985} for a generalization of bases of a matroid. By applying this algorithm to Hamilton cycles, we have proved that KthTSP is psudopolynomial when TSP is polynomial. Future investigations can be applying this algorithm for appropriate combinatorial objects.


\begin{thebibliography}{1}

\bibitem{Bondy and Murty 2008}
J. A. Bondy and U. S. R. Murty (2008), {\em Graph Theory with Applications}, Elsevier, New York.

\bibitem{Camerini et al. 1975}
P. M. Camerini, L. Fratta, and F. Maffioli (1975), {\em Efficient Methods for Ranking Trees}, Proceedings 3rd International Symposium on Network Theory, Split, Yugoslavia: 419.

\bibitem{Camerini et al. 1980a}
P. M. Camerini, L. Fratta, and F. Maffioli (1980a), {\em Ranking Arborescences in O(K m log n) time}, European Journal of Operations Research 4: 235.

\bibitem{Camerini et al. 1980b}
P. M. Camerini, L. Fratta, and F. Maffioli (1980b), {\em The K Best Arborescences of a Network}, Proceedings 3rd International Symposium on Network Theory, Split, Yugoslavia: 419.

\bibitem{Chaourar 2008}
B. Chaourar (2008), {\em On the Kth Best Base of a Matroid}, Operations Research Letters 36 (2): 239-242.

\bibitem{Chaourar 2010}
B. Chaourar (2010), {\em An O(K n log[K n]) algorithm for the Kth Best Spanning Tree in Series Parallel Graphs}, Arabian Journal for Science and Engineering, Arabian Journal for Science and Engineering 35 (1D): 29-35.

\bibitem{Cormen et al. 2009}
T. H. Cormen, C. E. Leiserson, R. L. Rivest, and C. Stein (2009), {\em Introduction to Algorithms}, 3rd Edition, MIT Press, Cambridge, USA.

\bibitem{Farebrother 1988}
R. W. Farebrother (1988), {\em Linear Least Squares Computations}, M. Dekker, New York, USA.

\bibitem{Gabow 1977}
H. N. Gabow (1977), {\em Two Algorithms for Generating Weighted Spanning Trees in Order}, SIAM Journal of Computing 6: 139.

\bibitem{Hamacher et al. 1984}
H. W. Hamacher, J. C. Picardi, and M. Queyranne (1985), {\em On Finding the K Best Cuts in a Network}, Operations Research Letters 2: 303.

\bibitem{Hamacher and Queyranne 1985}
H. W. Hamacher and M. Queyranne (1985), {\em K Best Solutions to Combinatorial Optimization Problems}, Annals of Operations Research 4 (6): 123-143.

\bibitem{Katoh et al. 1981}
N. Katoh, T. Ibaraki, and H. Mine (1981), {\em An Algorithm for Finding K Minimum Spanning Trees}, SIAM Journal of Computing 10 (2): 247.

\bibitem{Megiddo et al. 1981}
N. Megiddo, A. Tamir, and R. Chandrasekaran (1981), {\em An $O(n log^2 n)$ Algorithm for the Kth Longest Path in a Tree with Applications to Location Problems},  SIAM Journal of Computing 10 (2): 328.

\bibitem{Murty 1968a}
K. G. Murty (1968a), {\em Solving the Fixed Charge Problem by Ranking the Extreme Points}, Operations Research 16 (2): 268-279.

\bibitem{Murty 1968b}
K. G. Murty (1968b), {\em An Algorithm for Ranking all the Assignments in Increasing Order of Cost}, Operations Research 16 (3): 682-687.

\bibitem{Pollack 1961}
M. Pollack (1961), {\em Solutions of the kth Best Route Through a Network - A Review}, Journal of Mathematical Analysis and Applications 3: 547-559.

\bibitem{Schrijver 1986}
A. Schrijver (1986), {\em Theory of Linear and Integer Programming}, John Wiley and Sons, Chichester.

\bibitem{Wolsey 1973}
Wolsey (1973), {\em Generalized Dynamic Programming Methods in Integer Programming}, Mathematical Programming 4: 222-232.

\bibitem{Yanasse 2000}
H. H. Yanasse, N. Y. Soma, and N. Maculan (2000), {\em An Algorithm for Determining the K-Best Solutions of the One-Dimensional Knapsack Problem}, Pesquisa Operacional 20 (1): 117-134.


\end{thebibliography}
\end{document}